 \newtheorem{thm}{Theorem}[section]
 \newtheorem{cor}[thm]{Corollary}
 \newtheorem{lem}[thm]{Lemma}
 \theoremstyle{definition}
 \theoremstyle{remark}
 \numberwithin{equation}{section}
\begin{document}

%
%
%
%
%
%
%
%
%

\title [Biharmonic Hypersurfaces]
{Biharmonic Hypersurfaces With Recurrent Operators In The Euclidean Space}

\author[N. Mosadegh]{N. Mosadegh}

\address{Department of Mathematics,
 Azarbaijan Shahid Madani University,\\
Tabriz 53751 71379, Iran}

\email{n.mosadegh@azaruniv.ac.ir}

\author{E. Abedi}
\address{Department of Mathematics,
 Azarbaijan Shahid Madani University,\\
Tabriz 53751 71379, Iran}
\email{esabedi@azaruniv.ac.ir}
\subjclass{Primary 53C42; Secondary 53C43, 53B25}

\keywords{Biharmonic hypersurfaces, Recurrent operators}

\date{January 1, 2004}
\dedicatory{}

\begin{abstract}
We show how some of well-known recurrent operators such as recurrent curvature operator, recurrent Ricci operator, recurrent Jacobi operator, recurrent shape and Weyl operators have the significant role for biharmonic hypersurfaces to be minimal in the Euclidean space.
\end{abstract}
\footnotetext[1]{The first author is as corresponding author.}
\maketitle
\section{Introduction}

The phrase harmonic map $f:(M,g)\rightarrow (N, h)$ between two the Riemannian manifolds is which that refers to the critical points of the energy functional $E(f)=\frac{1}{2}\int_M |df|^2\star 1$. The studying $K$-harmonic maps, correspondingly, $k$-harmonic submanifolds began with J. Eells and L. Lemair. It was proposed to investigate $K$-harmonic maps as critical points of the functional 
\begin{eqnarray}
E: C^{\infty}(M, N)\longrightarrow R\ \ \ \,
E_K(f)=\int_M \|d+ d^{\star} \|^2 f\star 1
\end{eqnarray}
where $d$ and $d^{\star}$ are the exterior differentation and codifferentation on the vector bundle on $M$, respectively (see \cite{J,J_2}). The idea was supported in case $K=2$, which is called biharmonic maps and deal with $E_2(f)=\frac{1}{2}\int_M |\tau(f)|^2 d\upsilon$, where $\tau(f)= \textsf{trace}\nabla df$ is the tension field of $f$ \cite{G,G_2}. Furthermore, the Euler-Lagrange equation associated to $E_2$ is given by vanishing of the bitension field written as:
\begin{eqnarray}
\tau_2(f)&=&-\Delta \tau(f)- \textsf{trace} R^N(df, \tau(f))df=0.\nonumber
\end{eqnarray}
The interesting is in the non harmonic biharmonic maps which are called proper biharmonic. The first ambient spaces to investigate the proper biharmonic submanifolds are spaces of the constant sectional curvature. In this case, the biharmonic concept of submanifold in the Euclidean space with the harmonic mean curvature vector was established by B. Y. Chen. Indeed the well known conjecture was posted: any biharmonic submanifold in Euclidean space is harmonic see \cite{B}. By following the Chen's conjecture, hypersurfaces are the first class of submanifolds to be studied such that up to now, the following classification results reached .
\begin{itemize}
                                            \item Biharmonic hypersurfaces in $E^n, n = 3, 4, 5$, are minimal \cite{Chen2.13,Ha,sham};
                                            \item Biharmonic hypersurfaces in $4$-dimensional space form $H^4$ are minimal \cite{Bol};
                                             \item  The biharmonic submanifold with the constant mean curvature and biharmonic hypersurfaces with at most two distinct principal curvatures in the Euclidean space are minimal \cite{Dim};
                                              \item Biharmonic hypersurfaces with three distinct principal curvatures in $R^n$ and $S^n$ are minimal \cite{YU1,YU3};
                                          \end{itemize}
Furthermore, a result of K. Akutagawa and Maeta \cite{AK} states that the biharmonic complete submanifolds in the Euclidean space are minimal too. Motivated by the results, authors in \cite{Abedi2,Ab4} deal with the biharmonic Hopf hypersurfaces in the complex Euclidean spaces and in the odd dimensional spheres and showed they are minimal. Specifically, they proved the nonexistence result of the proper biharmonic Ricci Soliton hypersurfaces in the Euclidean space $E^{n+1}$, if the potential vector field is a principal direction.

In this survay we shall focus on the biharmonic hypersurfaces in the Euclidean space $E^{n+1}$ with an important object attaches to them is the recurrent operator. The key observation throughout is that the recurrent operators can be a property of the biharmonic hypersurfaces, which can not be proper one. Indeed, we show that the biharmonic hypersurfaces with some recurrent operators in the Euclidean space are minimal .Clearly, the results are given in sections 3, following the works in \cite{Dim,YU1}.  

 \section{preliminaries}
 
 Let $x: M^n\longrightarrow E^{n+1}$ be an isometric immersion of an $n$-dimensional hypersurface $(M^n,g)$ into the Euclidean space $E^{n+1}$. Let $\nabla$ and $\overline{\nabla}$ be the Levi-Civita connections on $M^n$ and $E^{n+1}$, respectively. Let $N$ be a local unit normal vector field to $M^n$ in $E^{n+1}$ and $\overrightarrow{H}=HN$ be the mean curvature vector field. One of the considerable equation in differential geometry is $\triangle x=-n\overrightarrow{H}$, where $\triangle$ the Laplacian-Beltrami operator is defined $\triangle$ = - \textsf{trace} $\nabla ^2$. The expressions assumed by the tension and bitension fields satisfies
\begin{eqnarray*}
\tau(x) = n \overrightarrow{H}, \ \ \ \tau_2(x) = -n \Delta\overrightarrow{ H},
\end{eqnarray*}
 then, the immersion $x$ is called biharmonic if and only if $\triangle \overrightarrow{H}=0$, where written as:
\begin{eqnarray*}
0= \triangle \overrightarrow{H}= 2A (\textsf{grad} H) +n H \textsf{grad} H+(\triangle^{\perp} H + H \textsf{trace} A^2),
\end{eqnarray*}
by identifying the bitension field in its normal and
tangent components, the main tool is obtained in the study of the proper biharmonic hypersurfaces in the Euclidean spaces.
\begin{thm}\cite{Bc}\label{00}
Let $x: M^n\longrightarrow E^{n+1}$ be an isometric immersion of an $n$-dimensional hypersurface $(M^n,g)$ into the Euclidean space $E^{n+1}$. Then $M^n$ is a biharmonic hypersurface if and only if
 \begin{eqnarray}
 \left\{
   \begin{array}{ll}
      \hbox{${\triangle}^\perp H +H \textsf{trace} A^2=0$;} \\
       \hbox{$2A(\textsf{grad} H)+nH \textsf{grad} H=0$,} \label{1.0}
   \end{array}
 \right.
 \end{eqnarray}
where $A$ denotes the Weingarten operator and $\Delta^{\perp}$ the Laplacian in the normal bundle of $M^n$ in $E^{n+1}$.
 \end{thm}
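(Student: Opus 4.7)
The plan is to start from $\tau_{2}(x)=-n\Delta\overrightarrow{H}$, so that biharmonicity of $x$ is equivalent to the vanishing of the vector Laplacian $\Delta\overrightarrow{H}$, computed through the ambient connection $\overline{\nabla}$. Writing $\overrightarrow{H}=HN$ and fixing a local orthonormal frame $\{e_{1},\dots ,e_{n}\}$ on $M$, I would compute
\[
\Delta(HN)=-\sum_{i=1}^{n}\bigl(\overline{\nabla}_{e_{i}}\overline{\nabla}_{e_{i}}(HN)-\overline{\nabla}_{\nabla_{e_{i}}e_{i}}(HN)\bigr),
\]
expand each term using the Weingarten and Gauss formulas, and then split the resulting expression into its components normal and tangent to $M$. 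Each component will then have to vanish separately, producing the two equations.

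For the inner derivative, Weingarten yields $\overline{\nabla}_{e_{i}}(HN)=e_{i}(H)N-HA(e_{i})$. Differentiating once more and decomposing $\overline{\nabla}_{e_{i}}(Ae_{i})$ via the Gauss formula $\overline{\nabla}_{X}Y=\nabla_{X}Y+g(AX,Y)N$, I would collect terms. Summing over $i$ and using the elementary identities
\[
\sum_{i}[e_{i}(e_{i}H)-(\nabla_{e_{i}}e_{i})(H)]=-\Delta H,\qquad \sum_{i}g(A^{2}e_{i},e_{i})=\textsf{trace}\,A^{2},\qquad \sum_{i}e_{i}(H)A(e_{i})=A(\textsf{grad}\,H),
\]
reduces the computation to a single unresolved piece, namely $\sum_{i}[\nabla_{e_{i}}(Ae_{i})-A(\nabla_{e_{i}}e_{i})]=\sum_{i}(\nabla_{e_{i}}A)(e_{i})$.

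The last term is handled by invoking the Codazzi equation, which in the flat ambient $E^{n+1}$ reads $(\nabla_{X}A)Y=(\nabla_{Y}A)X$; combined with the symmetry of $A$ and $\textsf{trace}\,A=nH$, this gives $\sum_{i}(\nabla_{e_{i}}A)(e_{i})=n\,\textsf{grad}\,H$. Assembling the pieces yields
\[
\Delta(HN)=\bigl(\Delta H+H\,\textsf{trace}\,A^{2}\bigr)N+2A(\textsf{grad}\,H)+nH\,\textsf{grad}\,H,
\]
and equating the normal and tangential components of the right-hand side to zero produces exactly \eqref{1.0} (with $\Delta^{\perp}H=\Delta H$ in codimension one, since $N$ is parallel in the rank-one normal bundle). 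The main obstacle is really bookkeeping rather than ideas: one must keep the sign conventions consistent (the Laplacian used here is $-\textsf{trace}\,\nabla^{2}$, and $\overline{\nabla}_{X}N=-A(X)$), take care to separate ambient from intrinsic derivatives when applying Gauss and Weingarten, and recognize that the Codazzi contraction delivers precisely the coefficient $n$ needed to match the tangential equation.
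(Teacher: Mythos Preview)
The paper does not actually prove this theorem: it is quoted from \cite{Bc}, and the only argument given is to write down the identity
\[
\Delta\overrightarrow{H}=2A(\textsf{grad}\,H)+nH\,\textsf{grad}\,H+\bigl(\Delta^{\perp}H+H\,\textsf{trace}\,A^{2}\bigr)
\]
and observe that biharmonicity amounts to the separate vanishing of the tangential and normal parts. Your proposal supplies precisely the computation behind that identity, via Gauss--Weingarten and the Codazzi contraction $\sum_{i}(\nabla_{e_{i}}A)e_{i}=n\,\textsf{grad}\,H$; it is correct and strictly more detailed than what the paper provides.
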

 
In the rest of the content, we deal with an orthonorma frame field $\{e_i\}_{i=1}^n$ on biharmonic hypersurface $M^n$ in such away that $e_i$ are the principal directions and $e_1=\frac{\mathsf{grad}H}{|\mathsf{grad}H|}$ and we call it is an \textit{appropriate frame field}.
 
\begin{lem}\label{2.1.3}
Let $M^n$ be a biharminic hypersurface in the Euclidean space $E^{n+1}$. Suppose that the mean curvature of $M^n$ is not constant. Then for the appropriate frame field $\{e_i\}_{i=1}^n$
\begin{eqnarray}
\nabla_{e_1}e_i=\sum_{k=1}^n \omega_{1i}^k e_k=0 \ \ \ \mathsf{for} \ \ i=1, ..., n,\ \ \nabla_{e_i}e_1=\omega_{ii}^1e_i\ \ \ \mathsf{for}\ \  i\neq 1,
\end{eqnarray}
where $\omega_{ij}^k$ are called connection forms for any $i,j, k=1,...,n$.
\end{lem}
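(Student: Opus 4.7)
The plan is to extract the required vanishings of the connection coefficients $\omega_{ij}^{k}=\langle \nabla_{e_{i}}e_{j},e_{k}\rangle$ by combining the tangential biharmonic equation from Theorem \ref{00} with the Codazzi identity. Since $e_{1}$ is colinear with $\textsf{grad}H$, the equation $2A(\textsf{grad}H)+nH\,\textsf{grad}H=0$ immediately forces $A(e_{1})=-\tfrac{nH}{2}e_{1}$, so $e_{1}$ is a principal direction with $\lambda_{1}=-nH/2$. The orthogonality of the remaining frame vectors to $\textsf{grad}H$ then gives $e_{j}(H)=0$ and hence $e_{j}(\lambda_{1})=0$ for $j\geq 2$.

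Next I would expand the Codazzi identity $(\nabla_{e_{k}}A)e_{i}=(\nabla_{e_{i}}A)e_{k}$ in the principal frame. Reading off the diagonal component yields $e_{k}(\lambda_{i})=\omega_{ii}^{k}(\lambda_{i}-\lambda_{k})$ for $k\neq i$, while the off-diagonal component with three distinct indices gives $\omega_{ki}^{j}(\lambda_{i}-\lambda_{j})=\omega_{ik}^{j}(\lambda_{k}-\lambda_{j})$. Specializing the first to $i=1$, $k=j\geq 2$ produces $0=\omega_{11}^{j}(\lambda_{1}-\lambda_{j})$, so $\omega_{11}^{j}=0$ whenever $\lambda_{1}\neq\lambda_{j}$; together with the trivial $\omega_{11}^{1}=0$ and the antisymmetry $\omega_{1j}^{1}=-\omega_{11}^{j}$, this already proves $\nabla_{e_{1}}e_{1}=0$.

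To kill the remaining components carrying exactly one index equal to $1$, I would evaluate the Lie bracket on $H$: for $i,j\geq 2$ one has $[e_{i},e_{j}](H)=0$, and expanding in the frame collapses to $(\omega_{ij}^{1}-\omega_{ji}^{1})|\textsf{grad}H|=0$, so $\omega_{ij}^{1}=\omega_{ji}^{1}$ on the open set where $H$ is non-constant. Substituting this symmetry into the off-diagonal Codazzi relation with $(i,j,k)=(i,j,1)$ yields $\omega_{ij}^{1}(\lambda_{j}-\lambda_{i})=0$, hence $\omega_{ij}^{1}=0$ for distinct $i,j\geq 2$ with $\lambda_{i}\neq\lambda_{j}$. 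Antisymmetry then gives $\omega_{i1}^{j}=0$ for all $j\neq i$, which is the second assertion of the lemma. Finally, plugging $\omega_{i1}^{j}=0$ back into the Codazzi relation with $(a,b,c)=(1,i,j)$ produces $\omega_{1i}^{j}(\lambda_{i}-\lambda_{j})=0$, and the vanishings $\omega_{1i}^{1}=0=\omega_{1i}^{i}$ complete the proof of $\nabla_{e_{1}}e_{i}=0$.

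The main obstacle is the possible appearance of repeated principal curvatures, which would make the factors $(\lambda_{i}-\lambda_{j})$ vanish and undermine the division steps above. I would handle this by restricting to the open dense subset where the multiplicities of the principal curvatures are locally constant, and by exploiting the freedom to rotate the orthonormal frame inside any eigenspace of multiplicity greater than one to diagonalize the relevant connection forms; crucially, $e_{1}$ is pinned by $\textsf{grad}H$, so this frame freedom never disturbs the normalization that drives the whole argument.
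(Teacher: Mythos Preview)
Your approach mirrors the paper's almost step for step: both use the tangential biharmonic equation to pin $\lambda_{1}=-\tfrac{nH}{2}$ as a simple eigenvalue with $e_{j}(\lambda_{1})=0$ for $j\ge 2$, extract the two Codazzi relations, exploit $[e_{i},e_{j}]\lambda_{1}=0$ to obtain the symmetry $\omega_{ij}^{1}=\omega_{ji}^{1}$, and then combine this symmetry with Codazzi and the simplicity of $\lambda_{1}$ to annihilate $\omega_{ij}^{1}$ for $i\ne j$. You are in fact more explicit than the paper in deducing $\omega_{11}^{j}=0$ from $e_{j}(\lambda_{1})=0$ and in pushing the argument on to $\omega_{1i}^{j}$.

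There is, however, a genuine gap in your last paragraph. Rotating the frame inside an eigenspace of multiplicity greater than one will indeed diagonalize the \emph{symmetric} array $(\omega_{ij}^{1})_{i,j\ge 2}$, so that part of your fix is sound. But the array $(\omega_{1i}^{j})_{i,j\ge 2}=\big(\langle\nabla_{e_{1}}e_{i},e_{j}\rangle\big)$ is \emph{skew} in $i,j$, and a nonzero real skew-symmetric matrix is never diagonalizable by an orthogonal change of basis; so ``diagonalizing the relevant connection forms'' cannot finish the proof of $\nabla_{e_{1}}e_{i}=0$ when $\lambda_{i}=\lambda_{j}$. (The paper's own argument shares this lacuna: it stops at $\omega_{ij}^{1}=0$ for $i\ne j$ and then simply asserts the conclusion.) A correct repair is to observe that, once you know $\omega_{1i}^{1}=0$ and $\omega_{1i}^{j}=0$ whenever $\lambda_{i}\ne\lambda_{j}$, the covariant derivative $\nabla_{e_{1}}$ maps each eigenspace of $A$ into itself; you may then choose the frame inside each eigenspace by parallel transport along the integral curves of $e_{1}$, which forces $\omega_{1i}^{j}=0$ for all $j$ by construction while keeping the frame orthonormal and principal.
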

\begin{proof}
Let $x:M^n\longrightarrow E^{n+1}$ be an isometric immersion of the biharmonic hypersurface $M^n$ with the non constant mean curvatuer. So,  there exists a point $p \in M^n$, where \textsf{grad}$H\neq 0$ at $p$ then there is an open subset $U$ of $M^n$ such that \textsf{grad}$H\neq 0$ on $U$. By Theorem \ref{00} we have \textsf{grad}$H$ is a principal direction corresponding to the unique principal curvature $\frac{-n}{2}H$. Suppose that the weingarten operator $A$ takes the form $Ae_i=\lambda_i e_i$, i.e. $e_i$ is an eigenvector of $A$ with eigenvalue $\lambda_i$. We choose $e_1$ such that $e_1$ is parallel to \textsf{grad}$H$ where it expresses \textsf{grad}$H=\sum_{i=1}^n(e_iH)e_i$, this shows that $(e_1H)\neq 0$ and $(e_iH)=0$ for any $i=1, ..., n$. For following our approach, we need to estimate the connection forms $\omega_{ij}^k$ which is given $\nabla_{e_i}e_j=\sum_{i=1}^n\omega_{ij}^ke_k$. By this we have
\begin{eqnarray}\label{2.1.2}
\omega_{ki}^i=0,\ \ \ \omega_{ki}^j+ \omega_{kj}^i=0 \ \ \ i\neq j, \ \ i, j, k=1, ..., n
\end{eqnarray}
since $\nabla_{e_k}<e_i, e_j>=0$. Morever, by the above and the Codazzi equation we find 
\begin{eqnarray}
e_k(\lambda_j)e_i + (\lambda_i-\lambda_j)\omega_{ki}^je_j=e_i(\lambda_k)e_k+ (\lambda_k-\lambda_j)\omega_{ik}^je_j
\end{eqnarray}
which yields
\begin{eqnarray}\label{2.3.7}
e_i(\lambda_j)=(\lambda_i-\lambda_j)\omega_{ji}^j\nonumber\\
(\lambda_i-\lambda_j)\omega_{ki}^j=(\lambda_k-\lambda_j)\omega_{ik}^j
\end{eqnarray}
for distinct $i, j$ and $k$ where $i,j, k=1, ..., n$.
Now, we set $\lambda_1=\frac{-n}{2}H$, this implies $(e_1\lambda_1)\neq 0$ and $(e_i \lambda_1)=0$ for any $i=2, ..., n$. Then we have
\begin{eqnarray}
0=[e_i, e_j]\lambda_1=(\omega_{ij}^1-\omega_{ji}^1)(e_1\lambda_1) , \ \ \  2\leq i,j \leq n, \ \ \ i\neq j
\end{eqnarray}
which shows 
\begin{eqnarray}\label{1.2.1}
\omega_{ij}^1=\omega_{ij}^1, \ \ \  2\leq i,j \leq n, \ \ \ i\neq j.
\end{eqnarray}
Observe that for indices $j=1$ and $ 2\leq i, k\leq n$ the equation (\ref{2.3.7}) follows
\begin{eqnarray*}
(\lambda_i-\lambda_1)\omega_{ki}^1=(\lambda_k-\lambda_1)\omega_{ik}^1,
\end{eqnarray*}
because of uniqueness of $\lambda_1$ and  (\ref{1.2.1}) by the above we have
\begin{eqnarray*}
\omega_{ij}^1=\omega_{ji}^1=0, \ \ \ i\neq j,\ \ \  2 \leq i, j \leq n.
\end{eqnarray*}
On the one hand, from (\ref{2.1.2}) it follows $\omega_{k1}^1=0$ and $\omega_{k1}^j+\omega_{kj}^1=0$ for any $i,j, k=1, ..., n$. Then, $\omega_{1i}^1=\omega_{11}^i=0$ where $i=1, ..., n$. So, 
\begin{eqnarray*}
\omega_{ij}^1=\omega_{ji}^1=0, \ \ \ i\neq j,\ \ \  i, j=1, ...,  n.
\end{eqnarray*}
Afterall, putting this all together, give the claime.
\end{proof}

 \section{Biharmonic hypersurfaces in the Euclidean space $E^{n+1}$ with the recurrent operators}

Let $T$ be a tensor on the Rimannian manifold $M^n$, then $T$ is said to be\textit{recurrent} if there exists a certain $1$-form $\eta$ on $M^n$ such that for any $X$ tangent to $M^n$ satisfies $\nabla_X T= \eta(X) T$. So, the recurrent $(1,1)$-tensors are extension of the parallel one.
\begin{thm}
Let $M^n$ be a biharmonic hypersurface with the recurrent Ricci operator in the Euclidean space $E^{n+1}$, then $M^n$ is a minimal hypersurface.
\end{thm}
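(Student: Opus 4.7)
The plan is to argue by contradiction. Suppose $M^n$ is not minimal; then $\textsf{grad}\,H \neq 0$ on some nonempty open set $U$. On $U$, Theorem \ref{00} identifies $e_1 := \textsf{grad}\,H/|\textsf{grad}\,H|$ as a principal direction with the unique principal curvature $\lambda_1 = -\frac{n}{2}H$, and Lemma \ref{2.1.3} supplies an orthonormal principal frame $\{e_i\}_{i=1}^n$ with $\nabla_{e_1}e_i = 0$ and $\nabla_{e_i}e_1$ parallel to $e_i$ for $i \geq 2$.

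The first step is to translate the recurrence hypothesis into a linear ansatz for the principal curvatures. By the Gauss equation, the Ricci operator $S = nHA - A^2$ is diagonal in the principal frame with eigenvalues $\rho_i = \lambda_i(nH - \lambda_i)$; in particular $\rho_1 = -\frac{3n^2}{4}H^2 \neq 0$ on $U$. Expanding $\nabla_{e_k}S = \eta(e_k) S$ and projecting onto the frame gives
\begin{eqnarray*}
e_k(\rho_i) &=& \eta(e_k)\,\rho_i,\\
(\rho_i - \rho_j)\,\omega_{ki}^j &=& 0,\qquad j \neq i.
\end{eqnarray*}
Since $\rho_1$ depends only on $H$ and $e_k H = 0$ for $k \geq 2$, the first family forces $\eta(e_k) = 0$ for $k \geq 2$ and $\eta(e_1) = 2 e_1(H)/H$, and taking ratios shows $\rho_i/\rho_1$ is constant on $U$. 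Solving $\rho_i = \lambda_i(nH - \lambda_i)$ for $\lambda_i$ then yields $\lambda_i = \alpha_i H$ for constants $\alpha_i$, with $\alpha_1 = -n/2$ and $\alpha_i \neq -n/2$ for $i \geq 2$ by the uniqueness of that principal curvature; the degenerate case $\rho_i \equiv 0$ gives $\lambda_i \in \{0, nH\}$, still of the form $\alpha_i H$.

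Next, one extracts a rigid algebraic constraint on the $\alpha_i$. Substituting $\lambda_i = \alpha_i H$ into the Codazzi identity (\ref{2.3.7}) for the pair $(e_1, e_i)$ gives $\omega_{ii}^1 = \frac{\alpha_i}{\alpha_i + n/2}\cdot\frac{e_1(H)}{H}$ for $i \geq 2$. Computing $\langle R(e_1, e_i)e_i, e_1\rangle$ via the Lemma \ref{2.1.3} connection data and equating with its Gauss value $\lambda_1\lambda_i = -\frac{n\alpha_i}{2}H^2$ yields the ODE $(\omega_{ii}^1)^2 - e_1(\omega_{ii}^1) = \frac{n\alpha_i}{2}H^2$; substituting the Codazzi expression for $\omega_{ii}^1$ reduces this to a non-degenerate quadratic $Q(\alpha_i) = 0$ with leading coefficient $\frac{n}{2}H^2 \neq 0$ and coefficients depending only on $H$, $e_1(H)$ and $e_1(e_1(H))$. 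If two distinct nonzero values $r_1 \neq r_2$ of $\alpha_i$ arose, then Vieta's formulas combined with the constancy of $r_1, r_2$ would force $e_1(H) = cH^2$ for some constant $c \neq 0$, making $r_1 + r_2 = -n$ and $r_1 r_2 = c^2 + n^2/4$, with discriminant $-4c^2 < 0$, contradicting the reality of $r_1, r_2$.

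Hence at most one nonzero value $r$ appears among $\{\alpha_i\}_{i \geq 2}$, so $M^n$ has at most three distinct principal curvatures (namely $-n/2$, $0$, and $r$). By the classification results cited in the introduction (\cite{Dim} for at most two, \cite{YU1} for three distinct principal curvatures in $E^{n+1}$), such a biharmonic hypersurface must be minimal, contradicting $\textsf{grad}\,H \neq 0$ on $U$. The main technical obstacle I expect is the Gauss-equation computation producing the quadratic $Q$ together with the subsequent discriminant argument; a secondary subtlety is verifying that the case $\rho_i \equiv 0$ in the first step indeed delivers $\lambda_i$ of the required linear form.
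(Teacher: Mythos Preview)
Your argument is correct and in fact more carefully justified than the paper's. Both proofs begin identically: diagonalize the Ricci operator as $\rho_i=\lambda_i(nH-\lambda_i)$ in the appropriate frame and read off from the recurrence condition the relations $e_k(\rho_i)=\eta(e_k)\rho_i$ and $(\rho_i-\rho_j)\omega_{ki}^j=0$. From here the paper proceeds by a case split (``some $\rho_j=0$'' versus ``all $\rho_j\neq 0$''), and in the second case simply asserts that $\rho_j=c\,\rho_1$ is a quadratic in $\lambda_j$ with at most two roots, so that together with $\lambda_1$ there are at most three distinct principal curvatures, and then invokes \cite{YU3}. Your route is different: you first extract $\lambda_i=\alpha_i H$ with genuine constants $\alpha_i$, then feed this into the Codazzi relation $e_1(\lambda_i)=(\lambda_1-\lambda_i)\omega_{i1}^i$ and the Gauss identity for the $(e_1,e_i)$-plane to produce a single quadratic $Q(\alpha_i)=0$ (with leading coefficient $\tfrac{n}{2}H^2\neq 0$) satisfied by every nonzero $\alpha_i$ for $i\geq 2$, and finish with a Vieta/discriminant argument showing that two distinct nonzero real roots are impossible. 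The payoff of your extra work is that the reduction to at most three principal curvatures is fully airtight, whereas the paper's shortcut implicitly relies on the constant $c$ in $\rho_j=c\rho_1$ being the \emph{same} for all $j\geq 2$, a point it does not address. Two minor remarks: the ``degenerate case $\rho_i\equiv 0$'' you flag is harmless, since the Gauss--Codazzi derivation of $Q(\alpha_i)=0$ uses only $\lambda_i=\alpha_i H$ and not $\rho_i\neq 0$, so $\alpha_i=n$ (if it occurs) is automatically a root of $Q$; and the Euclidean three-curvature reference you need at the end is \cite{YU3}, not \cite{YU1}.
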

\begin{proof}
Let $x: M^n \longrightarrow E^{n+1}$ be an isometric biharmonic immersion. Consider the appropriate frame field $\{e_i\}_{i=1}^n$ om $M^n$ then, by the Guass equation we have
$\textsf{Ric}(e_j)=\alpha_j e_j$ for any $j=1, ..., n$ where $\alpha_j=nH\lambda_j- \lambda_j^2$. Since the Ricci operator is recurrent i.e. $\big(\nabla_{X}Ric\big)Y=\eta(X)Ric(Y)$ for $X$ and $Y$ tangent to $M^n$, we get
\begin{eqnarray}\label{2.3.4}
\nabla_{e_i}Ric(e_j)= \eta(e_i)\alpha_j e_j+ \sum_{k}\omega_{ij}^k\alpha_ke_k, \ \ \ i,j=1, ..., n
\end{eqnarray}
Now, by derivative from both sides of $Ric(e_j)=\alpha_j e_j$ we have
\begin{eqnarray}\label{2.3.5}
\nabla_{e_i}Ric(e_j)=(e_i \alpha_j)e_j+ \alpha_j\sum_{k}\omega_{ij}^ke_k, \ \ \ i,j=1, ..., n
\end{eqnarray}
By following our approach we show that, at most three of principal curvatures are distinct at each point of $M^n$ and the result follows by \cite{YU3}. By the Lemma \ref{2.1.3} we have $\omega_{ij}^1=0=\omega_{ij}^j$ for any $i\neq j$. So, the conditions (\ref{2.3.4}) and (\ref{2.3.5}) imply 
\begin{eqnarray}\label{2.3.6}
\eta(e_i)\alpha_j=e_i\alpha_j, \ \ \ i\neq j, \ \ \ i,j=1, ..., n.
\end{eqnarray}
So, by the above we get two cases 
\begin{itemize}
\item for some $1\leq j \leq n$, $\alpha_j= 0$.
\item $\alpha_j\neq 0$ for all $j=1, ..., n$.
\end{itemize}
In the first case, from (\ref{2.3.4}), (\ref{2.3.5}) and (\ref{2.3.6}) we have $\sum_{k}\omega_{ij}^k\alpha_k e_k=0$. So, $\alpha_k=0$ for any $k\neq 1$. Since $\alpha_k=0$ is a second order equation of $\lambda_k$ then it has at most to distinct roots such as $\lambda_k=0$ and $\lambda_k=nH$. Therefor, $M^n$ has at most three distinct principal curvatures $0$, $nH$ and $\lambda_1=\frac{n}{2}H$.
In the second case, by taking (\ref{2.3.6}), it follows $\eta(e_i)=e_i \textsf{ln} \alpha_j$ since $\alpha_j \neq 0$ for any $ 1 \leq j \leq n$ and $i\neq j$ which implies $\frac{\textsf{ln}\alpha_j}{\alpha_1}=$constant such that gives $\alpha_j=c \alpha_1$ for a positive constant $c$ for any $j\neq 1$. By the above we have at most two distinct roots for $\lambda_j(j\neq 1)$. Therefore, we have at most three distinct principal curvature by adding $\lambda_1=\frac{-nH}{2}$. Then by following Yu Fu studying we get the result.
\end{proof}

\begin{thm}
Let $ M^n$ be a biharmonic hypersurface in the Euclidean space $E^{n+1}$ with the recurrent curvature operator. Then $M^n$ is minimal.
\end{thm}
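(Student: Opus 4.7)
My plan is to mirror the proof of the preceding theorem. First I would reduce to the case in which the mean curvature $H$ is non-constant on some open set $U\subset M^n$; otherwise, by the classification of biharmonic hypersurfaces with constant mean curvature quoted in the introduction \cite{Dim}, $M^n$ is already minimal. On $U$ I would work in the appropriate frame field $\{e_i\}_{i=1}^n$ of Lemma \ref{2.1.3} with $\lambda_1=-\tfrac{n}{2}H$, and aim to show that $M^n$ admits at most three distinct principal curvatures there. The classification of biharmonic hypersurfaces with three distinct principal curvatures in Euclidean space \cite{YU1} then gives $H\equiv 0$ on $U$, contradicting $\textsf{grad}\,H\neq 0$ and completing the proof.

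The key tool is the Gauss equation, which for a hypersurface of $E^{n+1}$ reads $R(X,Y)Z=\langle AY,Z\rangle AX-\langle AX,Z\rangle AY$. In the appropriate frame this yields $R(e_j,e_k)e_k=\lambda_j\lambda_k\,e_j$ for $j\neq k$, and $R(e_j,e_k)e_l=0$ whenever $j,k,l$ are pairwise distinct. Expanding
\begin{eqnarray*}
(\nabla_{e_i}R)(e_j,e_k)e_l &=& \nabla_{e_i}\bigl(R(e_j,e_k)e_l\bigr)-R(\nabla_{e_i}e_j,e_k)e_l\\
&& -R(e_j,\nabla_{e_i}e_k)e_l-R(e_j,e_k)\nabla_{e_i}e_l,
\end{eqnarray*}
and specialising to $i=1$, where Lemma \ref{2.1.3} guarantees $\nabla_{e_1}e_m=0$ for every $m$, the recurrence hypothesis $(\nabla_{e_1}R)(e_j,e_k)e_k=\eta(e_1)R(e_j,e_k)e_k$ collapses to
\begin{eqnarray*}
e_1(\lambda_j\lambda_k)=\eta(e_1)\,\lambda_j\lambda_k,\qquad j\neq k,\ j,k=1,\ldots,n.
\end{eqnarray*}

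As in the Ricci proof I would now split into two cases. If every product $\lambda_j\lambda_k$ with $j\neq k$ is non-zero, dividing the above identities pairwise yields $e_1\bigl(\ln(\lambda_1/\lambda_j)\bigr)=0$, so each $\lambda_j$ with $j\geq 2$ is a constant multiple of $\lambda_1$ along the integral curves of $e_1$; combined with $\sum_{j=1}^{n}\lambda_j=nH$ and $\lambda_1=-\tfrac{n}{2}H$ this forces the $\lambda_j$ for $j\geq 2$ to take at most two distinct values. If some product $\lambda_j\lambda_k$ vanishes, one uses the identities for the remaining pairs, together with the off-diagonal components of the recurrence identity tested against directions $e_i$ with $i\geq 2$, to conclude that the remaining $\lambda_k$ ($k\neq 1$) satisfy a single quadratic in $\lambda$, again producing at most three distinct principal curvatures. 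The main obstacle I anticipate is precisely this second step: Lemma \ref{2.1.3} controls $\nabla_{e_i}e_1$ but says nothing about the coefficients $\omega_{ij}^k$ with $i,j,k\geq 2$, so the recurrence identity in these directions acquires extra terms involving unknown connection forms, and one must check that they are compatible with, rather than obstructive to, the count of distinct principal curvatures already suggested by the case $X=e_1$.
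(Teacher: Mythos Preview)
Your Case~1 does not close. From $e_1(\lambda_j\lambda_k)=\eta(e_1)\lambda_j\lambda_k$ you correctly deduce that each ratio $\lambda_j/\lambda_1$ is constant along every integral curve of $e_1$, say $\lambda_j=c_j\lambda_1$. But the trace identity $\sum_{j}\lambda_j=nH=-2\lambda_1$ only gives $\sum_{j\geq 2}c_j=-3$, a single linear relation among $n-1$ numbers; this in no way bounds the number of distinct $c_j$, so it does \emph{not} force the $\lambda_j$ ($j\geq2$) to take at most two values. Moreover the $c_j$ are constant only along each individual $e_1$-curve and may vary transversally, so even this weak information is not pointwise. Case~2 you already flag as incomplete, and rightly so: the extra $\omega_{ij}^k$ terms for $i,j,k\geq2$ do not organise themselves into a single quadratic in $\lambda$ without further input.

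The paper takes a different and more direct route that bypasses both difficulties. Instead of testing recurrence on $R(e_j,e_k)e_k$ in the direction $e_1$, it tests it on $R(e_j,e_k)e_l$ with $j,k,l$ \emph{pairwise distinct}; by the Gauss equation this curvature vanishes, so recurrence gives $(\nabla_{e_i}R)(e_j,e_k)e_l=0$ outright. Expanding and using $\omega_{ip}^q+\omega_{iq}^p=0$ produces, for $i,j,k,l\geq2$, relations of the type $\omega_{il}^k\lambda_j\lambda_k=0$, hence $\omega_{il}^k=0$ whenever the relevant principal curvatures are non-zero. Feeding this into the Codazzi identity $(\lambda_l-\lambda_k)\omega_{il}^k=(\lambda_i-\lambda_k)\omega_{li}^k$ then propagates equality among $\lambda_2,\dots,\lambda_n$, so there are at most \emph{two} distinct principal curvatures, and one concludes via \cite{Dim} rather than the three-curvature classification. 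The idea you are missing is that the vanishing of $R$ on distinct triples converts the recurrence hypothesis into a purely algebraic constraint on the connection forms $\omega_{il}^k$ themselves---precisely the data Lemma~\ref{2.1.3} does not supply and which your $e_1$-direction ODE cannot recover.
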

\begin{proof}
Let $x: M^n \longrightarrow E^{n+1}$ be an isometric immersion of a biharmonic hypersurface $M^n$ in the Euclidean space $E^{n+1}$. Chossing the appropriate frame field $\{e_1,...e_n\}$, the Guass equation yields $R(e_i,e_j)e_k=0$, for distinct $i, j$ and $k$. According to the assumption the curvature operator $R$ is recurrent, i.e., $(\nabla_{X}R(Y,  Z)) W= \eta(X)R(Y, Z)W$ for all $X, Y, Z$ and $W$ tangent to $M^n$ so, $(\nabla_{e_i}R(e_j, e_k)) e_l= \eta(e_i)R(e_j, e_k)e_l=0$.
 Then, take the Guass equation we have
 \begin{eqnarray*}
0= (\nabla_{e_i}R(e_j,e_k))e_l&=&- R(e_j,e_k)\nabla_{e_i}e_l\\
 &=& \omega_{il}^k\lambda_k\lambda_j e_j-\omega_{il}^j\lambda_j\lambda_k e_k,
 \end{eqnarray*}
 where $i, j, k, l\neq 1$ beacause by Lemma \ref{2.1.3} $\omega_{ij}^1=0$ for $i\neq j$ . Then, from the linear independence of $\{e_i\}$ follows that  
$\omega_{il}^k\lambda_k\lambda_j =0 $. 
 Now, for all nonzero principal curvatures it follows $\omega_{il}^k=0$ for distinct indices. Thus, all we need is to use the Codazzi equation (\ref{2.3.7})  in which 
\begin{eqnarray*}
 0=(\lambda_l-\lambda_k)\omega_{il}^k=(\lambda_i-\lambda_k)\omega_{li}^k,
\end{eqnarray*}
this yields $\lambda_i=\lambda_k$ or $\omega_{li}^k=0$ for $j\neq k$. In particular if $\omega_{li}^k=0$, then the Codazzi equation implies $\lambda_l=\lambda_k$ too. 
Hence, by the above there exist at most two distinct principal curvatures at each point of $M^n$. Note that $\lambda_1=\frac{-n}{2}H$ that is corresponding to the principal direction $e_1=\frac{\mathsf{grad}H}{|\mathsf{grad}H|}$. Now, by following the studying in \cite{Dim}, we obtain the result.
\end{proof}

Now, directly by the above theorem we will have the following result
\begin{cor}
 The biharmonic locally symmetric hypersurfaces in the Euclidean space $E^{n+1}$ are minimal.
\end{cor}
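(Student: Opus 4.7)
The plan is to derive this corollary as an immediate consequence of the preceding theorem on biharmonic hypersurfaces with a recurrent curvature operator. The key observation is that local symmetry is a strictly stronger hypothesis than recurrence of the curvature operator, so essentially no new work is required.

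First, I would recall the definition: a Riemannian manifold $M^n$ is locally symmetric precisely when its curvature tensor is parallel, that is, $\nabla R = 0$. Written out against a tangent vector $X$, this says $\nabla_X R = 0$ for every $X \in TM$. Comparing this with the recurrence condition $\nabla_X R = \eta(X) R$ from the previous theorem, one sees that local symmetry is exactly the special case in which the recurrence $1$-form $\eta$ is identically zero. Thus every locally symmetric hypersurface automatically carries a recurrent curvature operator (with trivial recurrence form).

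Having established this, the argument would then simply invoke the preceding theorem applied to $M^n$: since $M^n$ is a biharmonic hypersurface in $E^{n+1}$ whose curvature operator is recurrent (with $\eta \equiv 0$), the theorem guarantees that $M^n$ is minimal. One could also phrase this as $H \equiv 0$, which is the conclusion sought.

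There is essentially no obstacle in this proof; the only point to verify is that the preceding theorem's hypothesis is genuinely satisfied by the zero $1$-form, which it is, since the identity $\nabla_X R = 0 = 0 \cdot R$ trivially fits the recurrence template. Consequently the corollary is a one-line consequence, and no further analysis of the principal curvatures or the Codazzi equation is needed beyond what is already carried out in the proof of the main theorem.
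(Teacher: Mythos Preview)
Your proposal is correct and matches the paper's approach exactly: the paper presents this corollary as an immediate consequence of the preceding theorem on recurrent curvature operators, with no separate proof given. Your observation that local symmetry ($\nabla R = 0$) is the special case of recurrence with $\eta \equiv 0$ is precisely the intended justification.
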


\begin{thm}
Let $M^n$ be a biharmonic hypersurface in the Euclidean space $E^{n+1}$, with the recurrent Jacobi operator $R_X$ for any $X\in \Gamma(T(M^n))$ . Then $M^n$ is minimal.
\end{thm}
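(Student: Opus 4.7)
The plan is to imitate the template used in the previous two theorems: assume $M^n$ is not minimal, work on an open set $U$ where $H$ is non-constant, use the appropriate frame field to extract strong algebraic constraints from the recurrence hypothesis, and conclude that at most two or three distinct principal curvatures can occur at each point, which by the classification results of Dimitri\'c and Yu Fu forces $M^n$ to be minimal.

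Concretely, from the Gauss equation for hypersurfaces one computes
\begin{eqnarray*}
R_{e_i}(e_j)=R(e_j,e_i)e_i=\lambda_i\lambda_j\,e_j\quad(i\neq j),\qquad R_{e_i}(e_i)=0,
\end{eqnarray*}
so $R_{e_i}$ is diagonal in the appropriate frame. I would apply the recurrence with the distinguished choice $X=e_1=\mathsf{grad}H/|\mathsf{grad}H|$, that is, $(\nabla_{e_k}R_{e_1})(e_j)=\eta(e_k)R_{e_1}(e_j)$, and expand the left-hand side using $\nabla_{e_k}e_j=\sum_l\omega_{kj}^l e_l$. Invoking Lemma \ref{2.1.3} to eliminate the connection forms with a $1$ in any slot (that is, $\omega_{1i}^k=0$ and $\omega_{kj}^1=0$ for $k\neq j$), the identity separates, coordinate by coordinate, into the scalar relation $e_k(\lambda_1\lambda_j)=\eta(e_k)\lambda_1\lambda_j$ on the $e_j$-component and the system
\begin{eqnarray*}
\lambda_1(\lambda_j-\lambda_l)\,\omega_{kj}^l=0,\qquad l\neq 1,j,
\end{eqnarray*}
on the remaining components.

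Since $M^n$ is assumed non-minimal and $\lambda_1=-\tfrac{n}{2}H$, we have $\lambda_1\neq 0$ on $U$, so the above forces $\omega_{kj}^l=0$ whenever $\lambda_j\neq\lambda_l$ for indices $k,j,l$ outside the $e_1$-direction. Feeding this into the Codazzi identity (\ref{2.3.7}) exactly as in the proof of the recurrent-curvature theorem yields $\lambda_l=\lambda_j$ or $\omega_{kj}^l=0$ for all distinct triples, so all principal curvatures different from $\lambda_1$ collapse into at most two values; together with $\lambda_1$ this gives at most three distinct principal curvatures at each point of $U$. The result then follows from the classifications in \cite{Dim} (for two principal curvatures) and \cite{YU1} (for three distinct principal curvatures). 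The main obstacle is the book-keeping in step two: the tensor $R_{e_1}$ depends on $e_1$ itself, so one must verify that all terms coming from differentiating $e_1$ really do drop out, and this is exactly where Lemma \ref{2.1.3} is indispensable because it gives $\nabla_{e_k}e_1\in\mathrm{span}\{e_k\}$ and $\nabla_{e_1}e_i=0$, killing every parasitic term.
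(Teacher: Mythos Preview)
Your outline follows the same template as the paper, but the paper does \emph{not} specialize to $X=e_1$; it works with $R_{e_j}$ for a generic index $j$ and compares the two expressions for $\nabla_{e_i}\bigl(R_{e_j}(e_k)\bigr)$. That choice matters. With a generic $j$ the $e_j$-component of the comparison produces the extra relation $\lambda_j\,\omega_{ik}^j=0$ (in addition to $(\lambda_l-\lambda_k)\omega_{ik}^l=0$ for $l\neq j$), and it is this relation, with $\lambda_j\neq 0$, that forces $\omega_{ik}^j=0$ outright and lets the paper conclude that all $\lambda_k$ with $k\geq 2$ coincide, hence only two distinct principal curvatures and an appeal to \cite{Dim}. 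When you fix $X=e_1$, the analogous ``diagonal'' component is the $e_1$-component, and there Lemma~\ref{2.1.3} already gives $\omega_{kj}^1=0$, so you recover no new information from that slot.

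This is exactly where your argument has a gap. The relation you do obtain, $\lambda_1(\lambda_j-\lambda_l)\omega_{kj}^l=0$ for $l\neq 1,j$, combined with Codazzi, does \emph{not} by itself bound the number of distinct values among $\lambda_2,\dots,\lambda_n$: if all the mixed connection forms $\omega_{kj}^l$ ($k,j,l\geq 2$ distinct) happen to vanish, both your relation and the second Codazzi identity are satisfied trivially, yet the $\lambda_j$ are unconstrained. So the sentence ``all principal curvatures different from $\lambda_1$ collapse into at most two values'' is not justified as written. To close the argument with your choice $X=e_1$ you would need to exploit the scalar relation $e_k(\lambda_1\lambda_j)=\eta(e_k)\lambda_1\lambda_j$ you wrote down (in the spirit of the recurrent-Ricci proof), or else revert to the paper's device of letting $j$ be arbitrary so as to pick up the missing constraint $\lambda_j\,\omega_{ik}^j=0$.
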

\begin{proof}
Let $x: M^n \longrightarrow E^{n+1}$ be an isometric immersion of a biharmonic hypersurface $M^n$ in the Euclidean space $E^{n+1}$. Now we use the assumption that the Jacobi operator is recurrent, i.e., $(\nabla_Y R_X)  (Z)= \eta(Y)R_X (Z)$ for all $X, Y$ and $Z$ tangent to $M^n$. Consider the appropriate frame field $\{e_i\}_{i=1}^n$ and the Guass equation then we see that the recurrent Jacobi operator expresses 
\begin{eqnarray*}
\nabla_{e_i}R_{e_j} (e_k)&=& \eta(e_i)R_{e_j} (e_k) + R_{e_j}(\nabla_{e_i}e_k)\nonumber\\
&=&\eta(e_i)R(e_k, e_j)e_j + R(\nabla_{e_i}e_k, e_j)e_j\nonumber\\
&=&-\eta(e_i)\lambda_j \lambda_k e_k-\lambda_j\sum_{l=1, l\neq j}^n \omega_{ik}^l\lambda_l e_l.
\end{eqnarray*}
Note that
\begin{eqnarray*}
\nabla_{e_i}R_{e_j} (e_k)&=& \nabla_{e_i}R(e_k, e_j)e_j\nonumber\\
&=&-e_i(\lambda_j \lambda_k)e_k- \lambda_j\lambda_k \sum_{l=1}^n \omega_{ik}^le_l,
\end{eqnarray*}
 comparing the components follows that $ \lambda_j\sum_{l=1, l\neq j}^n \omega_{il}^l\lambda_l e_l= \lambda_j\lambda_k\sum_{l=1}^n\omega_{ik}^le_l$. If $\lambda_j\neq 0$, then
\begin{eqnarray*}
\sum_{l=2}^n(\lambda_l-\lambda_k)\omega_{ik}^le_l-\lambda_j\omega_{ik}^je_j=0.
\end{eqnarray*} 
One consequence of the above is that $\lambda_l=\lambda_k$ for $2 \leq l, k \leq n$. Then take $\lambda_1=-\frac{n}{2}H$ and its uniquness turns out that there are two distinct principal curvatures at each points of $M^n$. Furthermore, because $\lambda_j\neq 0$ so $\omega_{ik}^j=0$ that the Codazzi equation follows $(\lambda_i- \lambda_j)\omega_{ki}^j=0$, which yields $\lambda_i= \lambda_j$ for $i\neq j$. Similarly, we get the same result. Now, by following the work in \cite{Dim} we obtain what was claimed.
\end{proof}

\begin{thm}
Let $M^n$ be a biharmonic hypersurface with the recurrent Weyl operator $W_{X,Y}$ for any $X, Y \in \Gamma(T(M^n))$ in the Euclidean space $E^{n+1}$. Then $M^n$ is  minimal.
\end{thm}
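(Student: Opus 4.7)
The plan is to follow the same strategy as in the three preceding theorems: assume $M^n$ is not minimal, restrict to the open subset $U\subset M^n$ on which $\mathsf{grad}\,H\neq 0$, work in the appropriate frame of Lemma \ref{2.1.3}, and show that $M^n$ has at most three distinct principal curvatures on $U$, so that the classification \cite{YU1} forces $H\equiv 0$ and produces a contradiction. The constant $H$ case is immediate: the first equation of Theorem \ref{00} gives $H\,\textsf{trace}\,A^2=0$, hence $H=0$.

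Step 1 is to realise the Weyl operator in the appropriate frame. From the Gauss equation one has $R(e_i,e_j)e_k=\lambda_i\lambda_j(\delta_{jk}e_i-\delta_{ik}e_j)$, $\textsf{Ric}(e_j)=\alpha_j e_j$ with $\alpha_j=nH\lambda_j-\lambda_j^2$, and scalar curvature $S=n^2H^2-\textsf{trace}\,A^2$. Substituting into the standard formula for the Weyl tensor I expect to obtain
\begin{equation*}
W(e_i,e_j)e_k=0\quad(i,j,k\ \textsf{pairwise distinct}),\qquad W(e_i,e_j)e_j=C_{ij}\,e_i\quad(i\neq j),
\end{equation*}
where
\begin{equation*}
C_{ij}=\lambda_i\lambda_j-\frac{\alpha_i+\alpha_j}{n-2}+\frac{S}{(n-1)(n-2)}.
\end{equation*}
So the whole analysis of $\nabla W$ reduces to tracking the symmetric scalars $C_{ij}$.

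Step 2 is to expand the recurrence $(\nabla_{e_l}W)(e_i,e_j)e_k=\eta(e_l)\,W(e_i,e_j)e_k$ by the Leibniz rule and split into components on $\{e_a\}$. Applying it with $k=j$ and $i,j,l\geq 2$ pairwise distinct, the antisymmetry $\omega_{li}^j+\omega_{lj}^i=0$ should cancel the $e_j$-component, leaving the scalar equation
\begin{equation*}
e_l(C_{ij})=\eta(e_l)\,C_{ij},
\end{equation*}
together with the off-diagonal constraints $\omega_{li}^a(C_{ij}-C_{aj})=0$ for $a\neq i,j$. Applying it instead with $i,j,k\geq 2$ pairwise distinct (so the right-hand side vanishes) and using $\omega_{lk}^j=-\omega_{lj}^k$ will yield the further relation $\omega_{lj}^k(C_{ik}-C_{ij})=0$. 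Throughout, Lemma \ref{2.1.3} erases every connection form involving the distinguished index $1$, so the system lives purely on indices $\geq 2$.

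Step 3 runs the same dichotomy that drove the recurrent Ricci argument. If some $C_{ij}$ vanishes, the off-diagonal constraints propagate this and force $C_{aj}=0$ for many indices $a$, which, read as a quadratic in $\lambda_j$ of leading coefficient $1/(n-2)$, constrains $\{\lambda_j:j\geq 2\}$ to a two-point set. If instead all $C_{ij}$ are nonzero, then $e_l\ln C_{ij}=\eta(e_l)$ is independent of $(i,j)$, so the $C_{ij}$'s are pairwise related by constants and again only two values of $\lambda_j$ ($j\geq 2$) are available. Together with $\lambda_1=-nH/2$, at most three distinct principal curvatures occur on $U$, and \cite{YU1} concludes that $H\equiv 0$, contradicting the choice of $U$. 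The hardest step will be Step 2: unlike $R$, $\textsf{Ric}$, or the Jacobi operator handled in the earlier theorems, the Weyl operator bundles $R$, $\textsf{Ric}$, and $S$ together, so $\nabla W$ carries $\nabla R$, $\nabla\textsf{Ric}$, and $dS$ simultaneously, and the bookkeeping needed to see them regroup into a clean multiple of $C_{ij}$ is delicate; Lemma \ref{2.1.3} and the antisymmetries $\omega_{la}^b+\omega_{lb}^a=0$ are what I expect to make the system collapse.
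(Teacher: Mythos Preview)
Your proposal is correct in spirit and would reach the conclusion, but the paper takes a much more direct route that sidesteps the dichotomy of Step~3 entirely. Rather than applying the recurrence with $l,i,j,k\ge 2$ and harvesting a system of scalar equations in the $C_{ij}$, the paper evaluates $(\nabla_{e_j}W_{e_i,e_j})(e_1)$ for $i,j\ge 2$: since $W_{e_i,e_j}(e_1)=0$ (three distinct indices), the recurrence makes this vanish, while Lemma~\ref{2.1.3} gives $\nabla_{e_j}e_1=\omega_{j1}^{\,j}e_j$, so the Leibniz expansion collapses to $\omega_{j1}^{\,j}\,W_{e_i,e_j}(e_j)=\omega_{j1}^{\,j}\,C_{ij}\,e_i=0$. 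This forces $C_{ij}=0$ for \emph{all} pairs $i,j\ge 2$ at once (the paper tacitly uses $\omega_{j1}^{\,j}\neq 0$). Subtracting the relations $C_{ij}=0$ and $C_{ik}=0$ then yields $(\lambda_j-\lambda_k)(3\lambda_i+\lambda_j+\lambda_k-H)=0$, from which all $\lambda_j$ with $j\ge 2$ coincide; hence only \emph{two} distinct principal curvatures occur and \cite{Dim} (not \cite{YU1}) finishes.

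Compared to your plan, the decisive trick is to take the distinguished direction $e_1$ as the evaluation argument: this exploits the very special form of $\nabla_{e_j}e_1$ from Lemma~\ref{2.1.3} to land immediately in your ``first case'' $C_{ij}=0$, so the second branch of your dichotomy and the delicate bookkeeping you flag in Step~2 never arise. Your approach has the merit of running in close parallel to the recurrent Ricci argument and of not silently assuming $\omega_{j1}^{\,j}\neq 0$, but it costs extra work and yields the weaker intermediate conclusion of three principal curvatures rather than two.
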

\begin{proof}
let $x: M^n \rightarrow E^{n+1}$ be an isometric immersion of a biharmonic hypersurface $M^n$ in the Euclidean space $E^{n+1}$.  In this case we see that with the appropriate frame field $\{e_i\}_{i=1}^n$ on $M^n$, the Weyl operator $W_{e_i, e_j}(e_k)$ vanishes for distinct indices, since 
\begin{eqnarray*}
W_{e_i,e_j}(e_k) &=& R(e_i,e_j)e_k- \frac{1}{n-2}\{\textsf{Ricci}(e_j,e_k)e_i- \textsf{Ricci}(e_i,e_k)e_j\\
 &+& g(e_j,e_k)\textsf{Ricci}(e_i)-g(e_i,e_k)\textsf{Ricci}(e_j)\}\\
 &+& \frac{s}{(n-1)(n-2)}\{g(e_j,e_k)e_i- g(e_i,e_k)e_j\},
\end{eqnarray*}
 where $R$ and $s$ are the curvature tensor and the scalar curvature, respectively and all terms are zero. Note that, 
 $W_{e_i, e_j}(e_j)= \alpha e_i$ where $\alpha=\lambda_i\lambda_j-(\lambda_i+ \lambda_j)(H-\lambda_i-\lambda_j)+\frac{s}{n-2}$. Consider the assumption that the Weyl operator is recurrent, i.e., $(\nabla_V W_{X,Y})(Z)= \eta(V)W_{X,Y}(Z)$, for all $X, Y, Z$ and $V$ tangent to $M^n$. In particular, it shows
\begin{eqnarray*}
0= \nabla_{e_j}W_{e_i, e_j}(e_1)=W_{e_i, e_j}(\nabla_{e_j}e_1)&=&\omega_{j1}^jW_{e_i,e_j}(e_j),\\
&=&\omega_{j1}^j\alpha e_i,
\end{eqnarray*} 
where by the Lemma \ref{2.1.3} $\nabla_{e_j}e_1= \omega_{jj}^1e_j$ for $j\neq 1$. Thus, $\alpha=0$ ,i.e., 
\begin{eqnarray}\label{2.3.10}
\lambda_i\lambda_j-(\lambda_i+ \lambda_j)(H-\lambda_i-\lambda_j)=a, \ \ \ i\neq j
\end{eqnarray}
in which $a=\frac{s}{2-n}$. Now, to reach the purppose we need to consider 
\begin{eqnarray}\label{2.3.11}
\lambda_i\lambda_k-(\lambda_i+ \lambda_k)(H-\lambda_i-\lambda_k)=a, \ \ \ i\neq k
\end{eqnarray}
then from (\ref{2.3.10}) and (\ref{2.3.11}) it follows 
\begin{eqnarray*}
3\lambda_i+\lambda_j+\lambda_k-H =0, \ \ \  2\leq i, j, k\leq n,
\end{eqnarray*}
which leads to that all the principal curvatures all equal. Now, take the unique principal curvature $\lambda_1=-\frac{nH}{2}$ corresponding to the principal direction $e_1=\frac{\mathsf{grad}H}{|\mathsf{grad}H|}$. So, there exist two distinct principal curvatures at each point of $M^n$. Then, by following the studying in\cite{Dim} we get the result.  
\end{proof}

\begin{thm}
Let $M^n$ be a biharmonic hypersurface with the recurrent shape operator in the Euclidean space $E^{n+1}$. Then $M^n$ is minimal.
\end{thm}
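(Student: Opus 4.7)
The plan is to evaluate the recurrence relation $(\nabla_X A)(Y) = \eta(X) A(Y)$ on the appropriate frame field and deduce, on the open set $U$ where $\mathrm{grad}\,H \neq 0$, that every principal curvature $\lambda_j$ with $j \neq 1$ must vanish. Combining this with the trace identity and the biharmonic relation $\lambda_1 = -\frac{n}{2}H$ will then force $H = 0$, contradicting the non-vanishing of $\mathrm{grad}\,H$; hence $H$ must be constant on $M^n$, and the constant mean curvature case already treated in \cite{Dim} delivers minimality.

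The first step is to plug $X = e_i$ (with $i \neq 1$) and $Y = e_1$ into the recurrence. Since Lemma \ref{2.1.3} forces $\nabla_{e_i} e_1$ to be a multiple of $e_i$, expanding $(\nabla_{e_i} A)(e_1) = \nabla_{e_i}(\lambda_1 e_1) - A(\nabla_{e_i} e_1)$ produces a vector proportional to $(\lambda_1 - \lambda_i)\,\omega_{ii}^1\,e_i$, while the right-hand side $\eta(e_i)\,\lambda_1\, e_1$ lies along $e_1$. Orthogonality of $e_1$ and $e_i$ separates the identity into two scalar conditions: the $e_1$-component gives $\eta(e_i) = 0$, because $\lambda_1 = -\frac{n}{2}H \neq 0$ on $U$; the $e_i$-component gives $\omega_{ii}^1 = 0$, because $\lambda_i \neq \lambda_1$ by the simplicity of $\lambda_1$.

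Feeding $\omega_{ii}^1 = 0$ into the Codazzi relation $e_i(\lambda_j) = (\lambda_i - \lambda_j)\omega_{ji}^j$ from (\ref{2.3.7}), taken at $i = 1$ and $j \neq 1$, collapses it to $e_1(\lambda_j) = 0$ (using $\omega_{j1}^j = -\omega_{jj}^1 = 0$). Plugging this back into the recurrence with $X = e_1$ and $Y = e_j$ gives $e_1(\lambda_j)\,e_j = \eta(e_1)\lambda_j e_j$, and since $\eta(e_1) = e_1(\lambda_1)/\lambda_1 \neq 0$ on $U$, it follows that $\lambda_j = 0$ for every $j \neq 1$. The trace identity $nH = \sum_i \lambda_i$ then reduces to $\lambda_1 = nH$, which combined with $\lambda_1 = -\frac{n}{2}H$ forces $H \equiv 0$ on $U$, contradicting $\mathrm{grad}\,H \neq 0$ there. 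The main technical obstacle is the clean splitting of the first recurrence identity into its $e_1$ and $e_i$ components; it hinges on the simplicity of $\lambda_1$ (allowing the cancellation of $\lambda_1 - \lambda_i$) and on the tight bookkeeping of connection forms supplied by Lemma \ref{2.1.3}. Once these steps are in place, the reduction to the constant mean curvature setting of \cite{Dim} is automatic.
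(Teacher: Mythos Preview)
Your argument is correct and takes a genuinely different route from the paper. The paper tests the recurrence on triples $(e_i,e_j,e_k)$ with distinct $i,j,k\geq 2$, obtaining $(\lambda_j-\lambda_k)\omega_{ij}^k=0$; together with Codazzi this is used to argue that $\lambda_2,\dots,\lambda_n$ coincide, so that $M^n$ carries at most two distinct principal curvatures, and \cite{Dim} is then invoked for that case. You instead exploit the distinguished direction $e_1$: the pair $(e_i,e_1)$ kills $\omega_{ii}^1$, Codazzi then yields $e_1(\lambda_j)=0$, and the pair $(e_1,e_j)$ together with $\eta(e_1)\neq 0$ forces $\lambda_j=0$ for all $j\geq 2$, producing an immediate contradiction with the trace identity. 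This is sharper and more self-contained: you show directly that $U=\emptyset$, and \cite{Dim} is needed only for the trivial constant-$H$ endgame, whereas the paper genuinely relies on the two-principal-curvature classification. One small point worth making explicit: your assertion that $\lambda_1\neq 0$ on $U$ is not automatic from $\textsf{grad}\,H\neq 0$ alone, but it does follow from the $(e_1,e_1)$ instance of the recurrence, since $(\nabla_{e_1}A)e_1=(e_1\lambda_1)e_1\neq 0$ while $\eta(e_1)Ae_1$ would vanish at any point with $H=0$. Both approaches lean on the simplicity of $\lambda_1=-\frac{n}{2}H$, which is already used in the proof of Lemma~\ref{2.1.3}, so that hypothesis is common ground rather than a new demand of your method.
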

Let $x: M^n \longrightarrow E^{n+1}$ be an isometric immersion of a biharmonic hypersurface $M^n$ in the Euclidean space $E^{n+1}$. We use the assumption that the shape operator is recurrent, i.e., $(\nabla_X A)Y=\eta(X)A(Y)$ for $X$ and $Y$ tangent to $M^n$ such that for the appropriate frame field $\{e_i\}_{i=1}^n$ it satisfies
\begin{eqnarray*}
g((\nabla_{e_i} A)e_j, e_k) &=& \eta(e_i)g(\lambda_je_j, e_k)=0.
\end{eqnarray*}
then the Codazzi equation yields 
\begin{eqnarray*}
0=g((\nabla_{e_i} A)e_j,e_k)&=&(\lambda_j-\lambda_k)g(\nabla_{e_i}e_j,e_k) \\
&=& (\lambda_j-\lambda_k)\omega_{ij}^k,
\end{eqnarray*}
for $2 \leq i,j, k\leq n$ where by the Lemma \ref{2.1.3}  $\nabla_{e_i}e_j= \sum_{l=2,l\neq j}^n \omega_{ij}^le_l$. By the above, one consequence is $\lambda_j=\lambda_k$ for $ 2\leq j, k \leq n$. Add the unique principal curvatuer $\frac{-nH}{2}$ corresponding with the principal direction $e_1=\frac{\mathsf{grad}H}{|\mathsf{grad}H|}$ then it determines that there exist two distinct principal curvatures at each point of $M^n$. If $\lambda_j\neq \lambda_k$ then $\omega_{ij}^k=0$ and in this case the Codazzi equation expresses
\begin{eqnarray}
0=(\lambda_j-\lambda_k)\omega_{ij}^k=(\lambda_i-\lambda_k)\omega_{ji}^k, 
\end{eqnarray}
so, $\lambda_i=\lambda_k$ where $ 2\leq i, k \leq n$. Similarly, take the $\lambda_1=\frac{-nH}{2}$ it leads to there are two distinct principal curvatures at each point. Then we get the result by the work in \cite{Dim}.

 

\end{document}